\documentclass[12pt]{amsart}
\usepackage[margin=1in]{geometry}
\usepackage[english]{babel}
\usepackage[autostyle]{csquotes}
\usepackage[utf8]{inputenc}
\usepackage{subcaption}
\usepackage{amsmath}
\usepackage[dvipsnames]{xcolor}
\usepackage{algorithm}
\usepackage{algpseudocode}
\usepackage{amssymb}
\usepackage{mathrsfs}
\usepackage{float}
\usepackage{amsfonts}
\usepackage{amsthm}
\usepackage{asymptote}
\usepackage{mathdots}
\usepackage[all]{xy}
\usepackage[labelfont=bf,labelsep=period,justification=raggedright]{caption}
\usepackage{hyperref}
\usepackage[colorinlistoftodos]{todonotes}
\usepackage{tkz-fct}
\usepackage{tikz}
\usepackage{tikz-3dplot}
\usetikzlibrary{calc}
\usetikzlibrary{knots}
\usetikzlibrary{spath3, intersections, hobby}
\usetikzlibrary{arrows, automata}
\usetikzlibrary{arrows.meta}
\usetikzlibrary{shapes.multipart}
\usetikzlibrary{lindenmayersystems}
\usepackage{pgfplots}
\usepackage{multicol}
\PassOptionsToPackage{dvipsnames,svgnames}{xcolor}
\usepackage{textcomp}
\usepackage{bbm}
\usepackage{plain}
\usepackage{array}
\newcolumntype{C}{>{$}c<{$}}
\usepackage{forest}
\usepackage{tikzsymbols}
\usetikzlibrary{3d}

\theoremstyle{plain}
\newtheorem{thm}{Theorem}[section]
\newtheorem{lemma}[thm]{Lemma}

\newtheorem{prop}[thm]{Proposition}

\theoremstyle{definition}
\newtheorem{defn}[thm]{Definition}

\theoremstyle{remark}
\newtheorem{remark}[thm]{Remark}

\numberwithin{equation}{section}

\definecolor{nodeblue}{RGB}{52,101,164}
\definecolor{edgegray}{gray}{0.35}
\definecolor{evolve}{RGB}{196,46,46}
\definecolor{periodic}{RGB}{0,130,0}

\tikzset{
  vtx/.style={circle,draw=nodeblue,very thick,minimum size=7mm,inner sep=0pt,font=\small},
  edg/.style={line width=0.9pt,color=edgegray},
  evo/.style={-{Latex[length=2.5mm]},line width=0.9pt,color=evolve},
  per/.style={-{Latex[length=2.5mm]},line width=0.9pt,color=periodic,dashed}
}

\title[Dynamic Coprime Labeling]{Dynamic Coprime Labeling: A Novel Framework for Time-Sensitive Networks}
\author{Anushka Tonapi and Dana Paquin}
\date{November 2025}

\begin{document}
\begin{abstract}
In this paper, we introduce dynamic coprime labeling (DCL), a novel extension of coprime labeling for time-sensitive networks. In particular, we explore the question of whether there exists a graph labeling scheme that maintains relative coprimality among adjacent vertices as the graph evolves over time. We modify the definition of coprime labeling for DCL to include an injective coprime labeling function $f$, a time variable $t$, and a transformation function $g$. A DCL on a finite simple graph $G$ is a sequence $(f_t)_{t\ge 0}$ of injective vertex labelings with the property that every edge remains labeled by coprime integers at every time step, and the evolution is a \emph{coprime-preserving} transformation $g:\mathbb{N}\to\mathbb{N}$ independent of $t$. We prove that a graph admits a DCL if and only if it admits a classical coprime labeling (existence equivalence). We characterize families of coprime-preserving transformations and provide proofs of the existence of DCLs for paths, wheels, cycles, and the $n$--hypercube. We also include two different types of maps for the coprime-preserving transformation, and look at an application of DCL to Carmichael's theorem. The results consolidate DCL as a rigorous framework for further algorithmic and applied investigations.
\end{abstract}

\maketitle

\section{Introduction}
\noindent
Graph labeling is an area of study within combinatorics and graph theory, having extensive applications in fields such as cryptography and network theory. In general, a \emph{graph labeling} is an assignment of integers to the vertices or edges, or both, depending on certain constraints. Typically, a graph labeling is an injective function that maps elements from $\mathbb{Z}$ to the vertices or edges of a graph. This field of study goes back to around the 1960s. In the last six decades, over 350 graph labeling techniques have been introduced~\cite{gallian}. However, most of these labelings have adhered to static conditions only; i.e., they operate in systems that do not change dynamically with time.

Wherever graphs are used today, they are usually seen in time-sensitive systems like cryptographic networks or communication systems that are subject to change with time. This motivates the question of whether one can introduce a time parameter into classical labeling schemes while preserving their number-theoretic properties.

The \emph{coprime labeling} of a graph gives adjacent vertices relatively prime integer labels, studied since the work of Entringer and surveyed comprehensively by Gallian~\cite{gallian}. Prime and coprime labelings have been established for many families of graphs and remain open for others. This paper develops a dynamic extension, \emph{dynamic coprime labeling}, which aims to preserve classical coprime labeling while allowing labels to evolve through time under a fixed map $g$.

The concept of \emph{prime labeling} was first introduced by Roger Entringer and was further discussed in a seminal paper by Tout, Dabboucy, and Howalla~\cite{tout}. The idea of \emph{neighbourhood prime labeling} was introduced by Patel and Shrimali~\cite{patel}. The \emph{neighbourhood} of a vertex can be defined as the set of all vertices adjacent to that vertex, excluding itself. If the vertices of a graph $G$ can be labeled with $\{1,2,\ldots, n\}$ so that for any vertex of degree at least $2$, the labels in its neighbourhood are relatively prime, this is called a \emph{neighbourhood prime labeling}, and a graph that admits such a labeling is called a \emph{neighbourhood-prime graph}.

Neighbourhood prime labeling provides one route to incorporating local number-theoretic structure into graphs. Our focus here is different: we keep the classical edge-based coprimality condition, but allow vertex labels to change systematically over time via a coprime-preserving transformation.

\section{Preliminaries: Classical Prime and Coprime Labelings}
We use $\mathbb{N}=\{1,2,3,\dots\}$, $\gcd(\cdot,\cdot)$ for the greatest common divisor, and finite simple graphs $G=(V,E)$.

\begin{defn}[Prime labeling]\label{def:prime}
For $|V|=n$, a \emph{prime labeling} is a bijection $f:V\to\{1,2,\dots,n\}$ such that $\gcd(f(u),f(v))=1$ for all $uv\in E$~\cite{gallian}.
\end{defn}

\begin{defn}[Coprime labeling]\label{def:coprime}
A \emph{coprime labeling} assigns pairwise distinct values in $\{1,\dots,k\}$ (with $k\ge n$) to $V$ so that $\gcd(f(u),f(v))=1$ for all $uv\in E$~\cite{berliner}.
\end{defn}

\begin{center}
\begin{tikzpicture}
  \node[vtx,label=below:{\scriptsize 1}] (a) at (0,0) {};
  \node[vtx,label=below:{\scriptsize 2}] (b) at (2.2,0) {};
  \node[vtx,label=below:{\scriptsize 3}] (c) at (4.4,0) {};
  \node[vtx,label=below:{\scriptsize 4}] (d) at (6.6,0) {};
  \draw[edg] (a)--(b)--(c)--(d);
\end{tikzpicture}

\vspace{2mm}
\small Figure~1. A prime-labeled path $P_4$ (labels $1,2,3,4$ satisfy $\gcd(i,i+1)=1$).
\end{center}

\section{Definitions and Notation}
We collect the definitions used throughout.

\begin{defn}[Coprime-preserving transformation]\label{def:cpp}
A map $g:\mathbb{N}\to\mathbb{N}$ is \emph{coprime-preserving} if
\[
\gcd(a,b)=1 \ \Longrightarrow\ \gcd\bigl(g(a),g(b)\bigr)=1
\quad\text{for all }a,b\in\mathbb{N}.
\]
\end{defn}

\begin{defn}[Dynamic coprime labeling (DCL)]\label{def:DCL}
Let $G=(V,E)$ be a finite simple graph. A \emph{dynamic coprime labeling (DCL)} of $G$ with respect to a coprime-preserving map $g:\mathbb{N}\to\mathbb{N}$ is a sequence of injective labelings
\[
f_t:V\to\mathbb{N} \quad (t\ge0)
\]
such that:
\begin{enumerate}
    \item $f_0$ is a coprime labeling of $G$ (i.e.\ $\gcd(f_0(u),f_0(v))=1$ for all $uv\in E$),
    \item $f_{t+1}=g\circ f_t$ for all $t\ge0$, and
    \item for every edge $uv\in E$ and every time $t\ge0$,
    \[
      \gcd(f_t(u),f_t(v))=1.
    \]
\end{enumerate}
We say that $G$ \emph{admits} a DCL (for a given $g$) if such a sequence exists.
\end{defn}

\begin{defn}[Periodic DCL]\label{def:periodic}
A DCL $(f_t)_{t\ge0}$ is \emph{periodic} if there exists $T\ge1$ with $f_{t+T}=f_t$ for all $t\ge0$; equivalently, the label set
\[
\mathcal{L}=\bigcup_{t\ge0} f_t(V)
\]
is finite and $g$ acts as a permutation on $\mathcal{L}$.
\end{defn}

\begin{defn}[$n$--hypercube graph]
The $n$--hypercube $Q_n$ is defined on the vertex set $V(Q_n)=\{0,1\}^n$ where two vertices are adjacent if and only if they differ in exactly one coordinate. It is bipartite with parts
\[
V_0=\{x\in\{0,1\}^n: \sum_i x_i \text{ even}\}, \quad
V_1=\{x\in\{0,1\}^n: \sum_i x_i \text{ odd}\}.
\]
\end{defn}

\begin{defn}[Bounded/Unbounded Labelings]\label{def:bounded-dichotomy}
Let $(f_t)_{t\ge 0}$ be a dynamic coprime labeling (DCL) of a finite graph $G=(V,E)$ with respect to $g$. Define the \emph{label--evolution set}
\[
\mathcal{L} \;:=\; \bigcup_{t\ge 0} f_t(V)
\;=\; \{\, x\in\mathbb{N} \;:\; \exists\, t\ge 0,\ \exists\, v\in V \text{ with } x=f_t(v)\,\}.
\]
We say the DCL is \emph{bounded} if $\mathcal{L}$ is finite, and \emph{unbounded} otherwise. We present two schemes:
\begin{enumerate}
  \item \textbf{Periodic/bounded case.} If there exists a finite set $S\subset\mathbb{N}$ with $f_0(V)\subseteq S$ and $g(S)=S$ such that $g$ is a permutation on $S$, then $f_{t+T}=f_t$ for some $T\ge 1$ (hence the DCL is periodic and $\mathcal{L}\subseteq S$ is finite).
  \item \textbf{Unbounded case.} If $g$ is injective and does \emph{not} act as a permutation on any finite set containing $f_0(V)$, then the sequence of label sets $\{f_t(V)\}_{t\ge 0}$ introduces infinitely many distinct values; equivalently, $\mathcal{L}$ is infinite (labels grow without bound).
\end{enumerate}
\end{defn}

\begin{remark}
We outline a framework for establishing DCL results.
\begin{enumerate}
  \item \textbf{Verify that $g$ is coprime-preserving.} Examples include $g(x)=x^k$ for integers $k\ge1$ and the map $g(x)=p_x$ sending $x$ to the $x$-th prime. In contrast, additive shifts $x\mapsto x+c$ are \emph{not} coprime-preserving in general.
  \item \textbf{Induction over time.} If $f_t$ is coprime on edges and $g$ is coprime-preserving, then $f_{t+1}=g\circ f_t$ is coprime on edges by induction. If $g$ is injective, injectivity of the labeling is also preserved.
  \item \textbf{Periodicity versus growth.} If $g$ acts as a permutation on a finite label set $S$ containing $f_0(V)$, then $(f_t)$ is periodic. If $g$ is injective and unbounded on $f_0(V)$, labels grow without bound.
\end{enumerate}
\end{remark}

\section{Main Results}

\begin{thm}[Existence Equivalence]\label{thm:existence-equivalence}
A finite graph $G$ admits a dynamic coprime labeling (with respect to some coprime-preserving map $g$) if and only if it admits a classical coprime labeling.
\end{thm}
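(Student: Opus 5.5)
The plan is to prove the two implications separately. Both follow almost directly from Definitions~\ref{def:coprime}, \ref{def:cpp} and \ref{def:DCL}. The only real content is exhibiting one coprime-preserving map $g$ for which the sequence $f_t=g^{t}\circ f_0$ stays injective.

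For the forward direction ($\Leftarrow$ of the statement read as ``coprime labeling implies DCL''), start from a classical coprime labeling $f_0:V\to\{1,\dots,k\}$. It is injective and satisfies $\gcd(f_0(u),f_0(v))=1$ on every edge. Take $g=\mathrm{id}_{\mathbb{N}}$, which is trivially coprime-preserving, or more interestingly $g(x)=x^k$ for a fixed $k\ge 1$. Define $f_t=g\circ f_{t-1}$ recursively, so $f_t=g^{t}\circ f_0$.

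Condition (1) of Definition~\ref{def:DCL} holds by hypothesis, and condition (2) holds by construction. Condition (3) follows by induction on $t$. If $\gcd(f_t(u),f_t(v))=1$, then coprime-preservation gives $\gcd(g(f_t(u)),g(f_t(v)))=1$.

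Injectivity of each $f_t$ is the one point that needs care, since Definition~\ref{def:cpp} does not require $g$ to be injective. I will handle it by choosing an injective $g$. Both the identity and $x\mapsto x^k$ on $\mathbb{N}$ are injective, and a composition of injective maps is injective, so each $f_t=g^{t}\circ f_0$ is injective. This is the step I expect to be the only obstacle, and choosing an explicit injective $g$ removes it. With the identity, the DCL is even periodic with $T=1$, in the sense of Definition~\ref{def:periodic}.

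For the reverse direction, suppose $(f_t)_{t\ge0}$ is a DCL of $G$ for some $g$. Then $f_0$ is an injective map $V\to\mathbb{N}$ with $\gcd(f_0(u),f_0(v))=1$ for all $uv\in E$. Since $V$ is finite, set $k=\max\bigl(|V|,\max_{v}f_0(v)\bigr)$. Then $f_0$ takes pairwise distinct values in $\{1,\dots,k\}$ with $k\ge n$, which is exactly a coprime labeling in the sense of Definition~\ref{def:coprime}. This completes the equivalence.
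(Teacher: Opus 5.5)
Your proposal is correct and follows essentially the same route as the paper. The paper also reads off a classical coprime labeling from $f_0$ in one direction, and in the other iterates an injective coprime-preserving map (specifically $g(x)=x^2$, giving $f_t(v)=f_0(v)^{2^t}$) to keep edges coprime and labels distinct. Your explicit bound $k=\max\bigl(|V|,\max_v f_0(v)\bigr)$ is a small piece of care the paper omits when matching the range condition in the coprime-labeling definition; rename either that $k$ or the exponent in $x^k$ to avoid the notational clash.
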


\begin{proof}
($\Rightarrow$) Suppose $(f_t)_{t\ge0}$ is a DCL of $G$ with respect to a coprime-preserving map $g$. By Definition~\ref{def:DCL}, the initial labeling $f_0$ is injective and satisfies $\gcd(f_0(u),f_0(v))=1$ for every edge $uv\in E(G)$. Hence $f_0$ is a classical coprime labeling of $G$.

($\Leftarrow$) Conversely, assume $G=(V,E)$ admits a coprime labeling $f_0:V\to\mathbb{N}$. Define $g(x)=x^2$. By Lemma~\ref{lem:power-coprime} below, $g$ is both injective and coprime-preserving. Define
\[
f_t(v)=g^{(t)}(f_0(v)) = \bigl(f_0(v)\bigr)^{2^t}
\quad\text{for }t\ge0.
\]
If $uv\in E$, then
\[
\gcd(f_t(u),f_t(v))
= \gcd\bigl(f_0(u)^{2^t},f_0(v)^{2^t}\bigr)
= \bigl(\gcd(f_0(u),f_0(v))\bigr)^{2^t}
= 1,
\]
since $f_0$ is coprime on edges. Thus $\gcd(f_t(u),f_t(v))=1$ for all edges $uv$ and all $t\ge0$. Injectivity of each $f_t$ follows from injectivity of $f_0$ and $g$. Hence $(f_t)_{t\ge0}$ is a DCL of $G$ with respect to $g(x)=x^2$.
\end{proof}

\begin{lemma}[Coprimality under time evolution]\label{lem:coprime-persistence}
Let $G=(V,E)$ be any finite graph and let $g:\mathbb N\to\mathbb N$ be a coprime-preserving map. Suppose the initial labeling $f_0:V\to\mathbb N$ satisfies
\[
\gcd(f_0(u),f_0(v))=1\quad\text{for all }uv\in E.
\]
Define $f_{t+1}=g\circ f_t$ for $t\ge0$. Then for all $t\ge0$ and all $uv\in E$,
\[
\gcd(f_t(u),f_t(v))=1.
\]
\end{lemma}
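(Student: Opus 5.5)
The plan is a straightforward induction on $t$, using only Definition~\ref{def:cpp}. Fix an edge $uv\in E$. The statement to prove for each $t\ge0$ is
\[
P(t):\quad \gcd\bigl(f_t(u),f_t(v)\bigr)=1 \quad\text{for all } uv\in E .
\]
The base case $P(0)$ is exactly the hypothesis on $f_0$, so nothing needs to be done there.

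For the inductive step, assume $P(t)$ holds and fix an edge $uv$. Set $a=f_t(u)$ and $b=f_t(v)$; these are elements of $\mathbb N$ with $\gcd(a,b)=1$ by $P(t)$. Because $f_{t+1}=g\circ f_t$, we have $f_{t+1}(u)=g(a)$ and $f_{t+1}(v)=g(b)$. Definition~\ref{def:cpp} applied to the pair $(a,b)$ then gives $\gcd(g(a),g(b))=1$, which is $\gcd(f_{t+1}(u),f_{t+1}(v))=1$. The edge $uv$ was arbitrary, so $P(t+1)$ holds, and induction gives the claim for every $t\ge0$.

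An equivalent way to phrase the argument is to note that $f_t=g^{(t)}\circ f_0$. A composition of coprime-preserving maps is again coprime-preserving, which is itself a one-line induction. So one could prove once that $g^{(t)}$ is coprime-preserving and then apply it directly to $(f_0(u),f_0(v))$. I would present the direct induction on $t$ as the main proof and mention this version only as a remark.

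I expect no real obstacle. The one point to handle carefully is well-definedness: $g$ is defined on all of $\mathbb N$ and $f_t$ takes values in $\mathbb N$, so each $f_t$ is a map $V\to\mathbb N$ and Definition~\ref{def:cpp} applies to its values. It also matters that the lemma does not assert injectivity. Unlike Definition~\ref{def:DCL}, it does not require $g$ to be injective, so the proof should not try to establish injectivity of $f_t$. The only conclusion needed is coprimality on edges.
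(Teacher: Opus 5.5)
Your proposal is correct and follows essentially the same route as the paper: induction on $t$, with the base case given by the hypothesis on $f_0$, and the inductive step obtained by applying the coprime-preserving property of $g$ to the coprime pair $(f_t(u),f_t(v))$. Your side remarks are accurate but not needed for the proof: the composition viewpoint $f_t=g^{(t)}\circ f_0$, and the observation that injectivity is not part of this lemma.
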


\begin{proof}
We proceed by induction on $t$.

\emph{Base case $t=0$.} This holds by hypothesis on $f_0$.

\emph{Inductive step.} Assume $\gcd(f_t(u),f_t(v))=1$ for all edges $uv$ at some time $t$. For any edge $uv$,
\[
f_{t+1}(u)=g(f_t(u)), \quad f_{t+1}(v)=g(f_t(v)).
\]
Since $g$ is coprime-preserving and $\gcd(f_t(u),f_t(v))=1$, we have
\[
\gcd(f_{t+1}(u),f_{t+1}(v))
= \gcd\bigl(g(f_t(u)),g(f_t(v))\bigr)=1.
\]
By induction, coprimality holds for all $t\ge0$.
\end{proof}

\begin{thm}[Paths]\label{thm:paths}
Every path \(P_n\) admits a DCL under \(g(x)=x^2\).
\end{thm}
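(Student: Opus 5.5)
The plan is to exhibit an explicit initial coprime labeling of $P_n$ and then push it forward in time with $g(x)=x^2$, appealing to Lemma~\ref{lem:coprime-persistence} for coprimality and to the injectivity of squaring on $\mathbb{N}$ for injectivity. Write the path as $v_1v_2\cdots v_n$ with edges $v_iv_{i+1}$ for $1\le i\le n-1$. Take $f_0(v_i)=i$, the prime labeling illustrated in Figure~1. This $f_0$ is injective. It is coprime on every edge because any common divisor $d$ of $i$ and $i+1$ divides their difference $1$, so $\gcd(i,i+1)=1$.

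Next, we check that $g(x)=x^2$ is coprime-preserving. We may cite Lemma~\ref{lem:power-coprime}, or argue directly: if $\gcd(a,b)=1$ and a prime $p$ divided both $a^2$ and $b^2$, then by Euclid's lemma $p\mid a$ and $p\mid b$, a contradiction. Applying Lemma~\ref{lem:coprime-persistence} with $f_{t+1}=g\circ f_t$ then gives $\gcd(f_t(v_i),f_t(v_{i+1}))=1$ for all $t\ge0$ and all edges. Explicitly,
\[
f_t(v_i)=i^{2^t}, \quad\text{and}\quad \gcd\bigl(i^{2^t},(i+1)^{2^t}\bigr)=\gcd(i,i+1)^{2^t}=1 .
\]

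It remains to verify conditions (1)--(3) of Definition~\ref{def:DCL}, and the only point needing care is that each $f_t$ is injective. Squaring is strictly increasing on $\mathbb{N}$, so $g$ is injective, and therefore $g^{(t)}$ is injective as well. Since $f_0$ is injective, each $f_t=g^{(t)}\circ f_0$ is injective; concretely, $i\mapsto i^{2^t}$ is strictly increasing.

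There is no real obstacle here. This is the $\Leftarrow$ direction of Theorem~\ref{thm:existence-equivalence} specialized to paths, so an alternative one-line proof simply cites that theorem. The only substantive ingredient is the existence of an initial coprime labeling, and for paths the consecutive-integer labeling supplies it trivially. As a remark, the resulting DCL is unbounded in the sense of Definition~\ref{def:bounded-dichotomy} whenever $n\ge2$, since $f_t(v_2)=2^{2^t}\to\infty$.
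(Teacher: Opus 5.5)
Your proposal is correct and follows the paper's proof essentially verbatim. Like the paper, you use the labeling $f_0(v_i)=i$, the identity $\gcd\bigl(i^{2^t},(i+1)^{2^t}\bigr)=\gcd(i,i+1)^{2^t}=1$, and injectivity of each $f_t$ from injectivity of $f_0$ and of squaring on $\mathbb{N}$; your side remarks (the link to the $\Leftarrow$ direction of Theorem~\ref{thm:existence-equivalence} and unboundedness for $n\ge 2$) are accurate but not needed.
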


\begin{proof}
Let \(P_n=(V,E)\) where \(V=\{v_1,\dots,v_n\}\) and \(E=\{\{v_i,v_{i+1}\}\mid 1\le i<n\}\). Define
\[
f_0(v_i)=i,\quad 1\le i\le n.
\]
For each edge $\{v_i,v_{i+1}\}$ we have $\gcd(i,i+1)=1$, so $f_0$ is a coprime labeling. Let $g(x)=x^2$ and define
\[
f_t(v_i)=g^{(t)}(f_0(v_i)) = i^{2^t}.
\]
Then, for each edge,
\[
\gcd(f_t(v_i),f_t(v_{i+1}))
= \gcd\bigl(i^{2^t},(i+1)^{2^t}\bigr)
= \bigl(\gcd(i,i+1)\bigr)^{2^t}
= 1.
\]
Injectivity of $f_t$ follows from injectivity of $f_0$ and $g$. Hence $P_n$ admits a DCL under $g(x)=x^2$.
\end{proof}

\begin{thm}[Odd Cycles]\label{thm:odd-cycles}
Every odd cycle \(C_n\) (with $n$ odd) admits a non-trivial DCL under \(g(x)=x^2\).
\end{thm}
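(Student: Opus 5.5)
The plan mirrors the proof of Theorem~\ref{thm:paths}: exhibit an explicit coprime labeling $f_0$ of $C_n$, then evolve it with $g(x)=x^2$ and invoke Lemma~\ref{lem:coprime-persistence}.

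Write $C_n=(V,E)$ with $V=\{v_1,\dots,v_n\}$ and $E=\{\{v_i,v_{i+1}\}:1\le i<n\}\cup\{\{v_n,v_1\}\}$. I would set $f_0(v_i)=i$. Consecutive edges $\{v_i,v_{i+1}\}$ are coprime because $\gcd(i,i+1)=1$. The closing edge $\{v_n,v_1\}$ has labels $n$ and $1$, and $\gcd(n,1)=1$. So $f_0$ is an injective coprime labeling, in fact a prime labeling in the sense of Definition~\ref{def:prime}.

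Next define $f_t(v_i)=i^{2^t}$. The key identity is $\gcd(a^{m},b^{m})=\gcd(a,b)^{m}$, which is the content of Lemma~\ref{lem:power-coprime}. Alternatively, one can apply Lemma~\ref{lem:coprime-persistence} directly, since $x\mapsto x^2$ is coprime-preserving: any prime dividing both $a^2$ and $b^2$ divides both $a$ and $b$. Either way, every edge remains coprime at every time $t$. Injectivity of $f_t$ follows because squaring is injective on $\mathbb{N}$ and $f_0$ is injective.

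The only genuine point is the word \emph{non-trivial}. I would interpret it as saying the labels actually evolve, i.e.\ $f_{t+1}\neq f_t$. This holds because $f_0(v_2)=2$ gives $f_t(v_2)=2^{2^t}$, which is strictly increasing in $t$. So the DCL is unbounded in the sense of Definition~\ref{def:bounded-dichotomy}, and in particular not constant.

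The main subtlety I expect is why the theorem is stated for odd $n$ at all, since the argument above works for every $n\ge3$. Oddness is needed only if the intended labeling avoids the label $1$, for example labels $2,\dots,n+1$. In that case the closing edge joins $2$ and $n+1$, which are coprime exactly when $n$ is even, so oddness would then be the condition to check. I would present the $\{1,\dots,n\}$ labeling, and remark that oddness is not actually used beyond $n\ge3$.
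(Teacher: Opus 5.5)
Your proposal is correct and follows the paper's proof essentially verbatim. It uses the same prime labeling $f_0(v_i)=i$, with the closing edge handled by $\gcd(n,1)=1$, the same evolution $f_t(v_i)=i^{2^t}$, and the same appeal to the power-map lemma for coprimality and injectivity. Your added remarks are also sound: the paper never defines ``non-trivial,'' and its phrase ``since $n$ is odd'' is superfluous because $\gcd(n,1)=1$ for every $n$. One small point is that your aside about labels $2,\dots,n+1$ does not explain the oddness hypothesis, since that labeling works only when $n$ is even.
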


\begin{proof}
Let \(C_n=(V,E)\) with $V=\{v_1,\dots,v_n\}$ and edges joining $v_i$ to $v_{i+1}$ for $1\le i<n$, and $v_n$ to $v_1$. Define $f_0(v_i)=i$ for $1\le i\le n$. For each edge $\{v_i,v_{i+1}\}$ we have $\gcd(i,i+1)=1$, and for the closing edge $\{v_n,v_1\}$,
\[
\gcd(f_0(v_n),f_0(v_1))=\gcd(n,1)=1
\]
since $n$ is odd. Thus $f_0$ is a coprime labeling of $C_n$. As in Theorem~\ref{thm:paths}, define $f_t(v_i)=i^{2^t}$. Coprimality is preserved as before, and injectivity holds. This yields a non-trivial DCL on $C_n$ under $g(x)=x^2$.
\end{proof}

\begin{thm}[Wheels]\label{thm:wheels}
Every wheel graph \(W_n\) for \(n\ge3\) admits a DCL under \(g(x)=x^2\).
\end{thm}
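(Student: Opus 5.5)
Our plan mirrors the proofs of Theorems~\ref{thm:paths} and~\ref{thm:odd-cycles}. We first exhibit an explicit coprime labeling $f_0$ of $W_n$, and then propagate it through time with $g(x)=x^2$, using Lemma~\ref{lem:coprime-persistence} (or the direct identity $\gcd(a^{2^t},b^{2^t})=\gcd(a,b)^{2^t}$).

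We take $W_n$ to have a hub $c$ joined to every rim vertex $v_1,\dots,v_n$, where the rim vertices form the cycle $C_n$. We set $f_0(c)=1$ and must label the rim with distinct integers greater than $1$ so that consecutive rim labels are coprime. The difficulty is the closing edge $v_nv_1$: the naive labels $2,\dots,n+1$ fail when $\gcd(n+1,2)\neq 1$, that is, when $n$ is odd. Coprime labelings are not required to be bijections onto $\{1,\dots,n+1\}$, so we use this freedom. We set $f_0(v_i)=i+1$ for $1\le i\le n-1$ and $f_0(v_n)=p$, where $p$ is a prime larger than $n$ (for instance the least prime exceeding $n$). Then:
\begin{enumerate}
\item each rim edge $v_iv_{i+1}$ with $i\le n-2$ has $\gcd(i+1,i+2)=1$;
\item the edges $v_{n-1}v_n$ and $v_nv_1$ have $\gcd(n,p)=1$ and $\gcd(2,p)=1$, because $p$ is a prime not dividing $n$ or $2$;
\item every spoke $cv_i$ has $\gcd(1,\cdot)=1$.
\end{enumerate}
Injectivity holds because $p>n$ exceeds all the other labels $1,2,\dots,n$. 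When $n$ is even we could instead use $f_0(v_i)=i+1$ throughout, since $\gcd(n+1,2)=1$, but the uniform construction covers both parities at once.

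Next we define $f_t(v)=f_0(v)^{2^t}$. Since $g(x)=x^2$ is injective and coprime-preserving (Lemma~\ref{lem:power-coprime}), each $f_t$ is injective, and edge-coprimality at every time follows by induction exactly as in Lemma~\ref{lem:coprime-persistence}. The hub label stays $1^{2^t}=1$. This is harmless, because injectivity is still preserved: no rim label ever equals $1$, since $a^{2^t}>1$ whenever $a>1$.

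The only real obstacle is producing a valid static coprime labeling in the odd-$n$ case, where the closing rim edge forces us to leave $\{1,\dots,n+1\}$. Choosing a prime $p>n$, which exists by Euclid, resolves this cleanly. The rest is the routine time-evolution argument already used for paths and cycles.
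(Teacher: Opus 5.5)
Your proof is correct and has the same structure as the paper's: label the hub $1$, give the rim a static coprime labeling, then propagate by squaring using Lemma~\ref{lem:power-coprime} and Lemma~\ref{lem:coprime-persistence}. The only difference is the rim labeling: the paper uses distinct primes $p_1,\dots,p_n$, so every rim edge is coprime at once with no closing-edge check. Your labels $2,\dots,n$ plus one prime $p>n$ keep all but one label inside $\{1,\dots,n\}$, at the cost of checking the two edges at $v_n$ separately.
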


\begin{proof}
Let $W_n$ consist of a central vertex $v_0$ and a cycle $C_n$ formed by vertices $v_1,\dots,v_n$. Let $p_1,p_2,\dots$ denote the sequence of primes in increasing order. Define
\[
f_0(v_0)=1,\qquad f_0(v_i)=p_i\quad (1\le i\le n).
\]
Each spoke edge $\{v_0,v_i\}$ has labels $1$ and $p_i$, which are coprime. Each rim edge $\{v_i,v_{i+1}\}$ (with indices taken modulo $n$) connects two distinct primes, hence $\gcd(p_i,p_{i+1})=1$. Thus $f_0$ is a coprime labeling of $W_n$.

Let $g(x)=x^2$ and define $f_t=g^{(t)}\circ f_0$ as before. By Lemma~\ref{lem:power-coprime}, $g$ is injective and coprime-preserving, so Lemma~\ref{lem:coprime-persistence} ensures that $\gcd(f_t(u),f_t(v))=1$ for all edges $uv$ and all $t\ge0$. Injectivity is preserved as well. Hence $(f_t)_{t\ge0}$ is a DCL of $W_n$ under $g(x)=x^2$.
\end{proof}

\begin{thm}[Hypercubes]\label{thm:hypercube}
Every $n$--dimensional hypercube $Q_n$ admits a dynamic coprime labeling (DCL) under any coprime-preserving transformation $g:\mathbb N\to\mathbb N$ (for example, $g(x)=x^2$).
\end{thm}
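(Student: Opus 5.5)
The plan is to build an explicit coprime labeling $f_0$ of $Q_n$ and then push it forward in time using Lemma~\ref{lem:coprime-persistence}, as in the proofs of Theorems~\ref{thm:paths} and~\ref{thm:wheels}.

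\textbf{Initial labeling.} Enumerate the $2^n$ vertices of $Q_n$ as $x_1,\dots,x_{2^n}$ in any order. Let $p_1<p_2<\cdots$ be the primes, and set $f_0(x_j)=p_j$. This $f_0$ is injective. Any two adjacent vertices receive distinct primes, so $\gcd(f_0(u),f_0(v))=1$ on every edge. Hence $f_0$ is a coprime labeling in the sense of Definition~\ref{def:coprime}, with $k=p_{2^n}\ge 2^n$. I would not use the bipartition $V_0\cup V_1$ here. It could give a tighter labeling, for instance in the search for a genuine prime labeling with labels in $\{1,\dots,2^n\}$, but the distinct-primes labeling makes coprimality automatic, even for vertices at distance two.

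\textbf{Time evolution.} Define $f_{t+1}=g\circ f_t$, so that $f_t=g^{(t)}\circ f_0$. Lemma~\ref{lem:coprime-persistence} applies to any coprime-preserving $g$ and gives $\gcd(f_t(u),f_t(v))=1$ for every edge $uv$ and every $t\ge0$. This settles conditions (1)--(3) of Definition~\ref{def:DCL}, apart from injectivity of each $f_t$.

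\textbf{Main obstacle: injectivity.} Definition~\ref{def:DCL} requires every $f_t$ to be injective, and this does not follow from coprime-preservation alone. For example, the constant map $g\equiv1$ is coprime-preserving, but it makes $f_1$ constant. I would therefore prove the statement under the added hypothesis that $g$ is injective, or at least injective on the forward orbit $\bigcup_{t} f_t(V)$. Under that hypothesis, injectivity of $f_t$ follows by induction from injectivity of $f_0$. This hypothesis is what the Remark in Section~3 and the example $g(x)=x^2$ supply: for $g(x)=x^2$, injectivity and coprime-preservation come from Lemma~\ref{lem:power-coprime}, and explicitly $f_t(x_j)=p_j^{2^t}$. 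I would state this reading of ``any coprime-preserving $g$'' explicitly, with the constant-map example showing that the injectivity hypothesis cannot be dropped.
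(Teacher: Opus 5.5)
Your proposal is correct and is essentially the paper's proof. The paper labels $V_0$ and $V_1$ injectively from disjoint prime sets $\mathcal P_0,\mathcal P_1$, which amounts to an injective labeling by pairwise distinct primes (the bipartition plays no real role), and then invokes Lemma~\ref{lem:coprime-persistence}; your injectivity caveat is also well-founded, since the paper's proof likewise only obtains injectivity of $f_t$ ``if $g$ is injective,'' and the constant map $g\equiv 1$ shows that the ``any coprime-preserving $g$'' wording needs that extra hypothesis for $n\ge 1$.
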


\begin{proof}
First, we construct a static coprime labeling. Let $\mathcal P_0$ and $\mathcal P_1$ be two disjoint infinite sets of primes with
\[
\mathcal P_0\cap\mathcal P_1=\varnothing.
\]
Assign injectively
\[
\phi_0:V_0\to \mathcal P_0, \qquad \phi_1:V_1\to \mathcal P_1,
\]
and define the labeling $f_0:V(Q_n)\to\mathbb N$ by
\[
f_0(v)=\begin{cases}
\phi_0(v), & v\in V_0,\\[3pt]
\phi_1(v), & v\in V_1.
\end{cases}
\]
Since edges of $Q_n$ always connect one vertex from $V_0$ and one from $V_1$, we have $\gcd(f_0(u),f_0(v))=1$ for every edge $uv\in E(Q_n)$. Thus $f_0$ is a static coprime labeling of $Q_n$.

Now define $f_{t+1}=g\circ f_t$ for $t\ge0$, with $f_0$ as above. By Lemma~\ref{lem:coprime-persistence}, coprimality is preserved at every time step. If $g$ is injective (for example, $g(x)=x^2$), then each $f_t$ remains injective as well. Hence $(f_t)_{t\ge 0}$ forms a valid dynamic coprime labeling of the hypercube.
\end{proof}

\begin{remark}
Because $Q_n$ is bipartite, the same argument extends to all bipartite graphs. If $G$ is bipartite with parts $(A,B)$, label vertices in $A$ and $B$ with disjoint sets of primes, and apply any coprime-preserving map $g$. Thus every bipartite graph admits a dynamic coprime labeling.
\end{remark}

\begin{lemma}[Power maps preserve coprimality]\label{lem:power-coprime}
For any integer $k\ge 1$, the map $g:\mathbb{N}\to\mathbb{N}$ given by $g(x)=x^k$ is injective and coprime-preserving. In particular, for all $a,b\in\mathbb{N}$,
\[
\gcd(a^k,b^k)=\bigl(\gcd(a,b)\bigr)^{k},
\]
so $\gcd(a,b)=1$ implies $\gcd(a^k,b^k)=1$.
\end{lemma}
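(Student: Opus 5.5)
The plan is to prove the two assertions of Lemma~\ref{lem:power-coprime} separately, using unique prime factorization for the gcd identity and monotonicity for injectivity. Coprime-preservation will then follow immediately from the identity.

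\emph{Injectivity.} On $\mathbb{N}$ the map $x\mapsto x^k$ is strictly increasing for $k\ge1$. Indeed, if $1\le a<b$ then
\[
b^k-a^k=(b-a)\bigl(b^{k-1}+b^{k-2}a+\cdots+a^{k-1}\bigr)>0,
\]
since both factors are positive. Hence $a\neq b$ implies $a^k\neq b^k$.

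\emph{The identity $\gcd(a^k,b^k)=\gcd(a,b)^k$.} Write $a=\prod_p p^{\alpha_p}$ and $b=\prod_p p^{\beta_p}$ over all primes $p$, where only finitely many exponents are nonzero. The gcd of two positive integers is $\prod_p p^{\min(\cdot,\cdot)}$ of their exponents. Since $a^k=\prod_p p^{k\alpha_p}$ and $b^k=\prod_p p^{k\beta_p}$, we get
\[
\gcd(a^k,b^k)=\prod_p p^{\min(k\alpha_p,k\beta_p)}=\prod_p p^{k\min(\alpha_p,\beta_p)}=\gcd(a,b)^k.
\]
The middle equality uses $\min(kx,ky)=k\min(x,y)$, which holds because $k>0$.

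\emph{Coprime-preservation.} If $\gcd(a,b)=1$, the identity gives $\gcd(a^k,b^k)=1^k=1$, so $g$ satisfies Definition~\ref{def:cpp}.

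The only step needing care is justifying the gcd formula via exponents. I would cite it as a standard consequence of the fundamental theorem of arithmetic. If a factorization-free argument is preferred, there is a short alternative for the coprime case alone. Suppose a prime $p$ divides both $a^k$ and $b^k$. By Euclid's lemma $p\mid a$ and $p\mid b$, which contradicts $\gcd(a,b)=1$. This route suffices for the "in particular" clause, and the factorization argument covers the full identity.
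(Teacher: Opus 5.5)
Your proposal is correct and follows essentially the same approach as the paper: the gcd identity comes from prime-power factorizations using $\min(k\alpha,k\beta)=k\min(\alpha,\beta)$, and coprime-preservation is read off from it. Your strict-monotonicity argument for injectivity and the Euclid's-lemma remark for the coprime case just make explicit what the paper calls immediate.
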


\begin{proof}
Injectivity is immediate: if $a^k=b^k$ for $a,b\in\mathbb{N}$ and $k\ge 1$, then $a=b$.

For coprime preservation, write the prime-power factorizations
\[
a=\prod p^{\alpha},\qquad b=\prod p^{\beta}\qquad(\alpha,\beta\ge 0),
\]
so that
\[
\gcd(a,b)=\prod p^{\min(\alpha,\beta)}.
\]
Then
\[
a^k=\prod p^{k\alpha},\qquad b^k=\prod p^{k\beta},\qquad
\gcd(a^k,b^k)=\prod p^{\min(k\alpha,k\beta)}=\prod p^{k\min(\alpha,\beta)}=\bigl(\gcd(a,b)\bigr)^k.
\]
In particular, if $\gcd(a,b)=1$ then $\gcd(a^k,b^k)=1$.
\end{proof}

\begin{lemma}[Generalization from $x^2$ to $x^k$]\label{cor:generalize-xk}
Let $G=(V,E)$ be any finite graph and let $f_0:V\to\mathbb{N}$ be injective with $\gcd\!\bigl(f_0(u),f_0(v)\bigr)=1$ for every edge $uv\in E$.
For any fixed $k\ge 1$, define $f_{t+1}=g\circ f_t$ with $g(x)=x^k$. Then
\[
f_t(v)=(f_0(v))^{k^t}\quad\text{and}\quad \gcd\!\bigl(f_t(u),f_t(v)\bigr)=1\ \ \text{for all }uv\in E,\ t\ge 0.
\]
Hence $g(x)=x^k$ yields a valid dynamic coprime labeling for every $k\ge 1$.
\end{lemma}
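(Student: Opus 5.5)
The plan is to establish the two displayed assertions separately: first the closed form for $f_t$, then coprimality on edges. Injectivity of each $f_t$ is needed for the final sentence, so I will verify it alongside them.

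\emph{Step 1 (closed form).} I will prove $f_t(v)=(f_0(v))^{k^t}$ by induction on $t$. The base case $t=0$ holds because $k^0=1$. For the inductive step, assume $f_t(v)=(f_0(v))^{k^t}$. Then
\[
f_{t+1}(v)=g(f_t(v))=\bigl((f_0(v))^{k^t}\bigr)^k=(f_0(v))^{k^{t+1}}
\]
by the exponent law $(a^m)^k=a^{mk}$. This closes the induction.

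\emph{Step 2 (coprimality).} Lemma~\ref{lem:power-coprime} shows that $g(x)=x^k$ is coprime-preserving. The hypothesis gives $\gcd(f_0(u),f_0(v))=1$ on every edge, so Lemma~\ref{lem:coprime-persistence} applies directly and yields $\gcd(f_t(u),f_t(v))=1$ for all $uv\in E$ and all $t\ge0$. As a cross-check, apply the identity $\gcd(a^m,b^m)=\gcd(a,b)^m$ from Lemma~\ref{lem:power-coprime} with exponent $m=k^t$, which gives
\[
\gcd\bigl(f_t(u),f_t(v)\bigr)=\gcd\bigl(f_0(u),f_0(v)\bigr)^{k^t}=1.
\]

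\emph{Step 3 (injectivity and conclusion).} Lemma~\ref{lem:power-coprime} also shows that $g$ is injective. A composition of injective maps is injective, so each $f_t=g^{(t)}\circ f_0$ is injective, and this can be checked by a trivial induction on $t$. Therefore all three conditions of Definition~\ref{def:DCL} hold: $f_0$ is a coprime labeling, $f_{t+1}=g\circ f_t$, and coprimality holds on every edge at every time. Hence $(f_t)_{t\ge0}$ is a DCL with respect to $g(x)=x^k$.

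There is no real obstacle here. The only point needing care is the degenerate case $k=1$: there $g$ is the identity and the DCL is constant, but the statement still holds, so no separate treatment is needed.
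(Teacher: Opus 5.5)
Your proof is correct and matches the paper's approach. The paper gives no separate proof of this lemma: it relies on Lemma~\ref{lem:power-coprime} (injectivity and coprime preservation of $x^k$) together with Lemma~\ref{lem:coprime-persistence}, mirroring its $k=2$ argument in Theorem~\ref{thm:existence-equivalence}, and your induction for $f_t(v)=(f_0(v))^{k^t}$ and the check via $\gcd(a^m,b^m)=\gcd(a,b)^m$ with $m=k^t$ simply make that argument explicit.
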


\begin{remark}
The case $k=2$ used earlier is thus a special instance of Lemma~\ref{lem:power-coprime}. 
For $k\ge 2$, the DCL is typically unbounded (labels grow as $k^t$ in the exponents); for $k=1$ it is static. 
Working modulo a fixed integer $m$, the map $x\mapsto x^k\!\!\pmod m$ gives periodic (bounded) labelings, and the resulting period connects to multiplicative orders modulo $m$.
\end{remark}

\subsection*{Dynamic coprime labeling on $Q_3$ under $g(x)=x^2$}
Take the bipartition
\[
V_0=\{000,011,101,110\},\qquad V_1=\{001,010,100,111\},
\]
and assign
\[
\begin{aligned}
f_0(000)&=2,&\quad f_0(011)&=3,&\quad f_0(101)&=5,&\quad f_0(110)&=7,\\
f_0(001)&=11,& f_0(010)&=13,& f_0(100)&=17,& f_0(111)&=19.
\end{aligned}
\]
Let $g(x)=x^2$ and define $f_{t+1}=g\circ f_t$.

\begin{center}
\begin{tikzpicture}[scale=1.0]
  \def\dx{2.6}   
  \def\dy{1.6}   
  \def\shift{8.2}

  \tikzset{
    vtx/.style={circle, draw=blue!60!black, very thick, minimum size=7mm, inner sep=0pt},
    edg/.style={line width=0.9pt, draw=black!60},
    lbl/.style={font=\scriptsize, label distance=1.2mm}
  }

  \node at (0,2.7) {\small $t=0$};

  \node[vtx,label={[lbl]below:2}] (A000) at (-1,  1) {};
  \node[vtx,label={[lbl]below:13}] (A010) at ( 1,  1) {};
  \node[vtx,label={[lbl]below:11}] (A001) at (-1, -1) {};
  \node[vtx,label={[lbl]below:3}]  (A011) at ( 1, -1) {};

  \node[vtx,label={[lbl]below:17}] (A100) at (-1+\dx,  1+\dy) {};
  \node[vtx,label={[lbl]below:7}]  (A110) at ( 1+\dx,  1+\dy) {};
  \node[vtx,label={[lbl]below:19}] (A111) at ( 1+\dx, -1+\dy) {};
  \node[vtx,label={[lbl]below:5}]  (A101) at (-1+\dx, -1+\dy) {};

  \draw[edg] (A000)--(A010)--(A011)--(A001)--(A000);
  \draw[edg] (A100)--(A110)--(A111)--(A101)--(A100);
  \draw[edg] (A000)--(A100);
  \draw[edg] (A010)--(A110);
  \draw[edg] (A011)--(A111);
  \draw[edg] (A001)--(A101);

  \node at (\shift,2.7) {\small $t=1$};

  \node[vtx,label={[lbl]below:$2^2$}]   (B000) at (\shift-1,  1) {};
  \node[vtx,label={[lbl]below:$13^2$}]  (B010) at (\shift+1,  1) {};
  \node[vtx,label={[lbl]below:$11^2$}]  (B001) at (\shift-1, -1) {};
  \node[vtx,label={[lbl]below:$3^2$}]   (B011) at (\shift+1, -1) {};

  \node[vtx,label={[lbl]below:$17^2$}]  (B100) at (\shift-1+\dx,  1+\dy) {};
  \node[vtx,label={[lbl]below:$7^2$}]   (B110) at (\shift+1+\dx,  1+\dy) {};
  \node[vtx,label={[lbl]below:$19^2$}]  (B111) at (\shift+1+\dx, -1+\dy) {};
  \node[vtx,label={[lbl]below:$5^2$}]   (B101) at (\shift-1+\dx, -1+\dy) {};

  \draw[edg] (B000)--(B010)--(B011)--(B001)--(B000);
  \draw[edg] (B100)--(B110)--(B111)--(B101)--(B100);
  \draw[edg] (B000)--(B100);
  \draw[edg] (B010)--(B110);
  \draw[edg] (B011)--(B111);
  \draw[edg] (B001)--(B101);
\end{tikzpicture}

\vspace{2mm}
\small Figure~5. A dynamic coprime labeling on $Q_3$ under $g(x)=x^2$: 
at $t=0$ each parity class is labeled with disjoint primes; 
at $t=1$ every label is squared. 
\end{center}

\subsection*{Evolution on $P_4$ under $g(x)=x^2$}
\begin{center}
\begin{tikzpicture}[x=1.1cm,y=1cm]
\node at (0,1.2) {\small $t=0$};
\node[vtx,label=below:{\scriptsize 1}] (a0) at (0,0) {};
\node[vtx,label=below:{\scriptsize 2}] (b0) at (1.6,0) {};
\node[vtx,label=below:{\scriptsize 3}] (c0) at (3.2,0) {};
\node[vtx,label=below:{\scriptsize 4}] (d0) at (4.8,0) {};
\draw[edg] (a0)--(b0)--(c0)--(d0);
\node at (7.0,1.2) {\small $t=1$};
\node[vtx,label=below:{\scriptsize $1^2$}] (a1) at (7,0) {};
\node[vtx,label=below:{\scriptsize $2^2$}] (b1) at (8.6,0) {};
\node[vtx,label=below:{\scriptsize $3^2$}] (c1) at (10.2,0) {};
\node[vtx,label=below:{\scriptsize $4^2$}] (d1) at (11.8,0) {};
\draw[edg] (a1)--(b1)--(c1)--(d1);
\draw[evo] (a0) to[bend left=20] (a1);
\draw[evo] (b0) to[bend left=20] (b1);
\draw[evo] (c0) to[bend left=20] (c1);
\draw[evo] (d0) to[bend left=20] (d1);
\end{tikzpicture}

\vspace{2mm}
\small Figure~2. Evolution of a DCL on $P_4$ via $g(x)=x^2$.
\end{center}

\subsection*{Periodic DCL on $C_3$}
\begin{center}
\begin{tikzpicture}[scale=1.0]
  \node[vtx,label=above:{\scriptsize $a$}] (x) at (0,1.4) {};
  \node[vtx,label=right:{\scriptsize $b$}] (y) at (1.4,-0.8) {};
  \node[vtx,label=left:{\scriptsize $c$}] (z) at (-1.4,-0.8) {};
  \draw[edg] (x)--(y)--(z)--(x);
  \draw[per] (x) to[bend left=25] (y);
  \draw[per] (y) to[bend left=25] (z);
  \draw[per] (z) to[bend left=25] (x);
\end{tikzpicture}

\vspace{2mm}
\small Figure~3. A periodic DCL on $C_3$: a permutation of a finite label set preserving coprimality.
\end{center}

\subsection*{Evolution on $W_5$ under $g(x)=x^2$}
\begin{center}
\begin{tikzpicture}[scale=1.0]
  \def\R{1.8}      
  \def\shift{7.2}  

  \node at (0,2.25) {\small $t=0$};

  \node[vtx,label=below:{\scriptsize 1}] (H0) at (0,0) {};
  \foreach \i/\ang/\lab in {1/90/2, 2/18/3, 3/-54/5, 4/-126/7, 5/162/11} {
    \node[vtx,label=below:{\scriptsize \lab}] (R0\i) at ({\R*cos(\ang)},{\R*sin(\ang)}) {};
  }

  \foreach \i/\j in {1/2,2/3,3/4,4/5,5/1} { \draw[edg] (R0\i) -- (R0\j); }
  \foreach \i in {1,2,3,4,5} { \draw[edg] (H0) -- (R0\i); }

  \node at (\shift,2.25) {\small $t=1$};

  \node[vtx,label=below:{\scriptsize $1^2$}] (H1) at (\shift,0) {};
  \foreach \i/\ang/\lab in {1/90/2^2, 2/18/3^2, 3/-54/5^2, 4/-126/7^2, 5/162/11^2} {
    \node[vtx,label=below:{\scriptsize $\lab$}] (R1\i) at ({\shift+\R*cos(\ang)},{\R*sin(\ang)}) {};
  }

  \foreach \i/\j in {1/2,2/3,3/4,4/5,5/1} { \draw[edg] (R1\i) -- (R1\j); }
  \foreach \i in {1,2,3,4,5} { \draw[edg] (H1) -- (R1\i); }

  \draw[evo, bend left=10] (H0) to (H1);
  \foreach \i in {1,2,3,4,5} { \draw[evo, bend left=18] (R0\i) to (R1\i); }
\end{tikzpicture}

\vspace{2mm}
\small Figure~4. A wheel graph $W_5$ with a dynamic coprime labeling:
hub $=1$ and rim labels given by distinct primes at $t=0$; under $g(x)=x^2$ each label
evolves to its square at $t=1$.
\end{center}

\section*{Application to the Carmichael Function}

In this section we work modulo a fixed integer $n$ and assume all vertex labels are coprime to $n$, so that they lie in the multiplicative group $(\mathbb{Z}/n\mathbb{Z})^\times$.

\begin{defn}[Vertex order and graph period]
Let $G=(V,E)$ and $f_0:V\to\mathbb{N}$ be a labeling with $\gcd(f_0(v),n)=1$ for all $v\in V$. For a vertex $v$, define its \emph{vertex order} modulo $n$ by
\[
T_v = \operatorname{ord}_n(f_0(v)) = \min\{\,T>0:\ f_0(v)^T \equiv 1\pmod n\,\}.
\]
The \emph{graph period} is the least common multiple
\[
\lambda_G(n) = \mathrm{lcm}_{v\in V(G)} T_v.
\]
\end{defn}

\begin{defn}[Carmichael function]
The \emph{Carmichael function} $\lambda(n)$ is the least positive integer $m$ such that
\[
a^{m} \equiv 1 \pmod n
\quad\text{for all }a\text{ with }\gcd(a,n)=1.
\]
Equivalently, $\lambda(n)$ is the exponent of the multiplicative group $(\mathbb{Z}/n\mathbb{Z})^\times$.
\end{defn}

\begin{prop}[Graph--Carmichael Function]\label{prop:graph-carmichael}
Let $G=(V,E)$ admit a DCL modulo $n$ with initial labeling $f_0:V\to(\mathbb{Z}/n\mathbb{Z})^\times$. Then:
\[
\lambda_G(n) \mid \lambda(n).
\]
If additionally the set $\{f_0(v):v\in V(G)\}$ generates $(\mathbb{Z}/n\mathbb{Z})^\times$ as a group, then
\[
\lambda_G(n)=\lambda(n).
\]
\end{prop}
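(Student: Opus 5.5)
The plan is to treat this as a statement about element orders in the finite abelian group $H=(\mathbb{Z}/n\mathbb{Z})^\times$. The DCL structure plays no role beyond guaranteeing that each $f_0(v)$ lies in $H$, so that $T_v=\operatorname{ord}_n(f_0(v))$ is well defined.

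For the divisibility, I will use the standard fact that if $a^m\equiv 1\pmod n$ then $\operatorname{ord}_n(a)\mid m$. To see this, write $m=qT+r$ with $0\le r<T$; then $a^r\equiv 1$, and minimality of the order forces $r=0$. By the definition of $\lambda(n)$ we have $f_0(v)^{\lambda(n)}\equiv 1$ for every $v$, hence $T_v\mid\lambda(n)$ for every $v\in V$. Since $\lambda(n)$ is then a common multiple of all the $T_v$, the least common multiple $\lambda_G(n)$ divides it.

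For the equality under the generation hypothesis, it suffices to show $\lambda(n)\mid\lambda_G(n)$, or equivalently that $a^{\lambda_G(n)}\equiv 1$ for all $a\in H$. Then minimality in the definition of $\lambda(n)$ gives $\lambda(n)\le\lambda_G(n)$, and combined with the first part this yields equality. Because $H$ is abelian, every $a\in H$ can be written as
\[
a\equiv\prod_{v\in V} f_0(v)^{e_v}\pmod n
\]
with integers $e_v$ (negative exponents are allowed, via inverses in $H$). Raising to the power $\lambda_G(n)$ gives
\[
a^{\lambda_G(n)}\equiv\prod_{v\in V}\bigl(f_0(v)^{\lambda_G(n)}\bigr)^{e_v}\equiv 1,
\]
since $T_v\mid\lambda_G(n)$ for each $v$. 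Commutativity is what allows the exponent to be distributed across the product.

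The only step needing care is this last one, namely that an exponent killing a generating set kills the whole group. It depends on $H$ being abelian and on interpreting negative powers correctly, both of which are immediate here. I will also remark that the coprimality conditions on edges are never used, so the proposition really concerns the label multiset $f_0(V)$ inside $H$. One could likewise record that under the evolution $x\mapsto x^k$ the labels return to $f_0$ once $k^t\equiv 1$ modulo every $T_v$, but that observation is not needed for the stated claims.
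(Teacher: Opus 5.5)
Your proof is correct. The first part (each $T_v\mid\lambda(n)$, so the lcm divides $\lambda(n)$) matches the paper's, with the added detail of why $a^m\equiv 1$ forces $\operatorname{ord}_n(a)\mid m$.

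For the equality you take a genuinely different route. The paper asserts that if the labels generate $(\mathbb{Z}/n\mathbb{Z})^\times$, then ``some element among them must have order equal to the exponent $\lambda(n)$.'' You instead show that $\lambda_G(n)$ annihilates every group element, because it annihilates each generator and the group is abelian.

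Your route is the one that actually works, because the paper's intermediate claim is false in general. In $(\mathbb{Z}/7\mathbb{Z})^\times$, which is cyclic of order $6$, take $2$ (order $3$) and $6\equiv-1$ (order $2$). These generate the group, since their product $5$ has order $6$, yet neither has order $\lambda(7)=6$. The lcm of their orders is still $6$, exactly as your argument predicts.

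A finite abelian group does contain some element whose order equals the exponent, but that element need not be among the chosen generators. The paper's proof conflates these two statements. If its claim held, it would give something stronger, namely a single label realizing the full period, but that is true only for special generating sets. Your argument uses nothing beyond commutativity and the minimality built into the definition of $\lambda(n)$, and it correctly places the conclusion on the lcm of the orders rather than on any individual order.
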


\begin{proof}
Each $f_0(v)$ lies in $(\mathbb{Z}/n\mathbb{Z})^\times$ and has order $T_v$ dividing $\lambda(n)$ by definition of $\lambda(n)$ as the exponent of the group. Therefore
\[
\lambda_G(n)=\mathrm{lcm}_{v\in V(G)} T_v
\]
is a least common multiple of divisors of $\lambda(n)$, so $\lambda_G(n)\mid \lambda(n)$.

If the labels $f_0(v)$ generate the entire group $(\mathbb{Z}/n\mathbb{Z})^\times$, then some element among them must have order equal to the exponent $\lambda(n)$, and all other orders divide $\lambda(n)$. Hence their least common multiple is exactly $\lambda(n)$, giving $\lambda_G(n)=\lambda(n)$.
\end{proof}

\subsection*{Graph-theoretic view of Carmichael numbers}

Recall Korselt's criterion: a composite integer $n$ is a \emph{Carmichael number} if and only if
\begin{enumerate}
\item $n$ is square-free, and
\item for every prime $p\mid n$, $p-1\mid n-1$.
\end{enumerate}
Equivalently, $n$ is Carmichael if $a^{n-1}\equiv1\pmod n$ for all $a$ with $\gcd(a,n)=1$.

In the language of the Carmichael function, $n$ is Carmichael if $\lambda(n)\mid n-1$ and $n$ is composite and square-free. If we realize a generating DCL on a graph $G$ modulo $n$, then by Proposition~\ref{prop:graph-carmichael} we have $\lambda_G(n)=\lambda(n)$, so the period of the DCL modulo $n$ divides $n-1$. Thus, for Carmichael $n$, the evolution of labels in a generating DCL exhibits a global period that divides $n-1$; this period is exactly the exponent of $(\mathbb{Z}/n\mathbb{Z})^\times$.

\section{Example: $n=561$}

Let $n=561=3\cdot11\cdot17$. For each prime factor $p\mid 561$, the group $(\mathbb{Z}/p\mathbb{Z})^\times$ is cyclic of order $p-1$, so
\[
|(\mathbb{Z}/3\mathbb{Z})^\times|=2,\quad
|(\mathbb{Z}/11\mathbb{Z})^\times|=10,\quad
|(\mathbb{Z}/17\mathbb{Z})^\times|=16.
\]
The multiplicative group $(\mathbb{Z}/561\mathbb{Z})^\times$ is isomorphic to
\[
(\mathbb{Z}/3\mathbb{Z})^\times \times (\mathbb{Z}/11\mathbb{Z})^\times \times (\mathbb{Z}/17\mathbb{Z})^\times,
\]
so its exponent is
\[
\lambda(561)=\mathrm{lcm}(2,10,16)=80.
\]
Since $80$ divides $560=n-1$ and $561$ is square-free, Korselt's criterion implies that $561$ is a Carmichael number.

Now take a graph $G$ and a DCL modulo $561$ whose initial labels $f_0(v)$ generate $(\mathbb{Z}/561\mathbb{Z})^\times$. By Proposition~\ref{prop:graph-carmichael}, the DCL has graph period
\[
\lambda_G(561)=\lambda(561)=80.
\]
Thus
\[
f_{t+80}(v)\equiv f_t(v)\pmod{561}\quad\text{for all }v\in V(G),\ t\ge0.
\]
This provides a combinatorial visualization of the periodicity underlying the Carmichael property of $561$: every vertex label returns to its initial value after $80$ time steps modulo $561$, and this global period divides $n-1=560$.

\section*{An Alternate DCL Map}

The power maps $g(x)=x^k$ are natural examples of coprime-preserving transformations. One can also construct other affine maps that remain coprime-preserving along edges, under arithmetic conditions on the base labeling.

\begin{thm}[Alternate Dynamic Coprime Labeling Map]\label{thm:alternate-map}
Let $G = (V,E)$ be a finite simple graph and let $f_0 : V \to \mathbb{N}$ be an injective coprime labeling. Fix a prime $p$ and define the affine map $g(x) = p x + 1$. For each $t \ge 0$, define
\[
f_t(v) = g^t(f_0(v)) = p^t f_0(v) + c_t, \quad \text{where} \quad c_t = \frac{p^t - 1}{p - 1}.
\]
Suppose that for every edge $uv \in E$ and every prime $q$ dividing $|f_0(v) - f_0(u)|$, the following condition holds:
\[
(p - 1)f_0(u) + 1 \not\equiv p^k \pmod{q}\quad \text{for all }k\in\mathbb{Z},
\]
i.e.\ $(p - 1)f_0(u) + 1 \notin \langle p \rangle \subset (\mathbb{Z}/q\mathbb{Z})^\times$. Then the sequence $\{f_t\}_{t \ge 0}$ defines a dynamic coprime labeling of $G$; that is, for all $t \ge 0$,
\[
\gcd(f_t(u), f_t(v)) = 1 \quad \text{for all } uv \in E,
\]
and each $f_t$ is injective on $V$.
\end{thm}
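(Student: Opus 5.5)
The plan is a direct prime-divisor argument at each fixed time $t$; I will not induct via Lemma~\ref{lem:coprime-persistence}, since $g(x)=px+1$ is not coprime-preserving in general. Throughout, fix an edge $uv\in E$ and write $a=f_0(u)$, $b=f_0(v)$. Injectivity of $f_0$ gives $a\neq b$. The closed form $f_t(x)=p^tx+c_t$ with $c_t=1+p+\dots+p^{t-1}$ follows by a one-line induction from $g(x)=px+1$.

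\textbf{Injectivity.} The map $g$ is strictly increasing because $p\ge 2$, so it is injective. Hence every iterate $g^t$ is injective, and $f_t=g^t\circ f_0$ is injective on $V$.

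\textbf{Coprimality for $t\ge1$.} The case $t=0$ is the hypothesis that $f_0$ is coprime, so assume $t\ge1$. Suppose a prime $q$ divides both $f_t(u)$ and $f_t(v)$. Then $q$ divides their difference $p^t(b-a)$, so $q=p$ or $q\mid b-a$.

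\emph{Case $q=p$.} This is impossible for $t\ge1$: we have $f_t(u)\equiv c_t\equiv 1\pmod p$, since every term of $c_t$ except the leading $1$ is a multiple of $p$.

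\emph{Case $q\mid b-a$ with $q\neq p$.} Multiply the congruence $f_t(u)\equiv 0\pmod q$ by $p-1$. Using $(p-1)c_t=p^t-1$, this gives
\[
p^t\bigl((p-1)a+1\bigr)-1\equiv 0 \pmod q .
\]
Since $q\neq p$, $p$ is invertible mod $q$, so $(p-1)a+1\equiv p^{-t}\pmod q$. Thus $(p-1)f_0(u)+1$ is congruent to a power of $p$ modulo $q$. But $q$ is a prime dividing $|f_0(v)-f_0(u)|$, so this contradicts the hypothesis. Therefore no such $q$ exists, and $\gcd(f_t(u),f_t(v))=1$.

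\textbf{Main obstacle.} The key step is the algebraic manipulation that turns $q\mid f_t(u)$ into a membership statement in $\langle p\rangle\subset(\mathbb{Z}/q\mathbb{Z})^\times$. It needs two observations: multiplying by $p-1$ clears the geometric-sum constant $c_t$ without dividing by $p-1$ modulo $q$, and the prime $q=p$ must be excluded separately, because the hypothesis says nothing when $p\mid b-a$. The hypothesis is stated only for the endpoint $u$ (with either orientation of the edge allowed), so the argument uses only $u$'s side.
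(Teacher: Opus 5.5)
Your proof is correct and follows essentially the same route as the paper. Both arguments push a common prime divisor $q$ of $f_t(u),f_t(v)$ onto $p^t(b-a)$, rule out $q=p$ via $f_t(u)\equiv 1 \pmod p$ for $t\ge1$, and multiply $p^ta+c_t\equiv 0 \pmod q$ by $p-1$ to place $(p-1)a+1$ in $\langle p\rangle$, contradicting the hypothesis; your version is actually more careful than the paper's in two places, since you correctly note that this multiplication needs no invertibility of $p-1$ modulo $q$ (the paper's parenthetical claim that $q\nmid p-1$ for every prime $q\neq p$ is false, e.g.\ $p=3$, $q=2$, though harmless), and your separate treatment of $t=0$ avoids relying on ``$c_t$ is not divisible by $p$,'' which fails for $c_0=0$.
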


\begin{proof}
For fixed $t\ge0$, the map $x \mapsto p^t x + c_t$ is strictly increasing on $\mathbb{N}$, hence injective. Since $f_0$ is injective, $f_t = g^t \circ f_0$ is injective for every $t$.

Now consider an edge $uv \in E$, and set $a = f_0(u)$, $b = f_0(v)$, and $d = b - a$. Without loss of generality, assume $b > a$ so that $d > 0$. Then
\[
\gcd(f_t(u), f_t(v)) = \gcd(p^t a + c_t,\, p^t b + c_t).
\]
Using the Euclidean identity $\gcd(x, y) = \gcd(x, y - x)$, we have
\[
\gcd(p^t a + c_t,\, p^t b + c_t) = \gcd(p^t a + c_t,\, p^t d).
\]
Since
\[
\gcd(p^t a + c_t,\, p^t) = 1,
\]
(because $c_t=\frac{p^t-1}{p-1}$ is not divisible by $p$), it follows that
\[
\gcd(p^t a + c_t,\, p^t d) = \gcd(p^t a + c_t,\, d).
\]

Suppose, for contradiction, that there is a prime $q$ dividing $\gcd(f_t(u),f_t(v))$. Then $q$ divides $d$ and divides $p^t a + c_t$. Substituting $c_t = \frac{p^t - 1}{p - 1}$, the congruence $p^t a + c_t \equiv 0 \pmod{q}$ becomes
\[
p^t a + \frac{p^t - 1}{p - 1} \equiv 0 \pmod{q}.
\]
Multiplying both sides by $(p - 1)$ (which is invertible modulo $q$ since $q\nmid p-1$ for primes $q\neq p$), we get
\[
p^t\bigl((p - 1)a + 1\bigr) \equiv 1 \pmod{q},
\]
i.e.
\[
(p - 1)a + 1 \equiv p^{-t} \pmod{q}.
\]
Thus $(p-1)a+1$ lies in the multiplicative subgroup $\langle p\rangle\subset(\mathbb{Z}/q\mathbb{Z})^\times$ generated by $p$ modulo $q$, contradicting the stated hypothesis. Hence no such prime $q$ can divide both $f_t(u)$ and $f_t(v)$ for any $t\ge0$.

Therefore, $\gcd(f_t(u),f_t(v)) = 1$ for all edges $uv \in E$ and all $t \ge 0$, and each $f_t$ is injective on $V$. This shows that $\{f_t\}_{t \ge 0}$ defines a dynamic coprime labeling of $G$.
\end{proof}

\begin{remark}[Problem Complexity]\label{rem:complexity}
One can view the study of DCLs also through a computational lens. Given a triple $(G,f_0,g)$, there are two natural algorithmic questions:
\begin{enumerate}
  \item \emph{Existence.} Given a graph $G=(V,E)$ and a transformation $g:\mathbb{N}\to\mathbb{N}$, decide whether there exists an initial coprime labeling $f_0$ such that the induced sequence $(f_t)_{t\ge0}$ defined by $f_{t+1}=g\circ f_t$ satisfies the coprime-adjacency constraint
  \[
  \gcd(f_t(u),f_t(v))=1\quad\text{for all }uv\in E,\ t\ge0.
  \]
  \item \emph{Verification.} Given explicit data $(G,f_0,g)$, determine whether the resulting sequence indeed preserves the coprime-adjacency constraint for all edges and all times, or finds a time/edge where it fails.
\end{enumerate}
The existence problem is naturally a constraint-satisfaction problem (CSP), and its precise complexity (e.g.\ NP-hardness for certain classes of $g$ and $G$) is an interesting direction for future research.
\end{remark}

\section*{Acknowledgments}
I wish to extend my gratitude to Prof. Dana Paquin of Stanford University for supporting me throughout the process, providing valuable feedback, and helping me upload it to the arXiv as a preprint. I would also like to thank my family for their constant encouragement and support.

\end{document}